\theoremstyle{plain}
\newtheorem{thm}{Theorem}[section]
\newtheorem*{thm*}{Theorem}
\newtheorem{cor}[thm]{Corollary}
\newtheorem{lem}[thm]{Lemma}
\newtheorem{prop}[thm]{Proposition}
\theoremstyle{definition}
\renewcommand{\epsilon}{\varepsilon}
\newcommand{\comm}[1]{}
\numberwithin{equation}{section}
\title[Commuting and product-zero probability in finite rings]{Commuting and product-zero probability \\ in finite rings}
\author[P.\ Shumyatsky]{Pavel Shumyatsky}
\address{Department of Mathematics, University of Brasilia, Brasilia DF, Brazil}
\email{pavel@unb.br}
\author {Matteo Vannacci}
\address{Matematika Saila \\ %Universidad del Pa\'is Vasco-Euskal
%Herriko Unibertsitatea
UPV-EHU \\ Barrio Sarriena, s/n \\
48910 Bilbao, Spain}
\email{matteo.vannacci@ehu.eus}
\subjclass[2020]{20C20,20P05,20F69,11M41}
\keywords {Finite rings, commuting probability, zero probability}
\thanks{This paper was written while the first author was visiting the Department of Mathematics of the University of the Basque Country. He expresses his sincere gratitude to the department for excellent hospitality.
The second author has been supported by the Spanish Government, grants PID2020-117281GB-I00 and  PID2019-107444GA-I00, partly with FEDER funds, and the Basque Government, grant IT1483-22.}
\begin{document}

 \begin{abstract}
  Let $\mathrm{cp}(R)$ be the probability that two random elements of a finite ring $R$ commute and $\mathrm{zp}(R)$ the probability that the product of two random elements in $R$ is zero. We show that if $\mathrm{cp}(R)=\varepsilon$, then there exists a Lie-ideal $D$ in the Lie-ring $(R,[\cdot,\cdot])$ with $\varepsilon$-bounded index and with $[D,D]$ of $\varepsilon$-bounded order. If $\mathrm{zp}(R)=\varepsilon$, then there exists an ideal $D$ in $R$ with $\varepsilon$-bounded index and $D^2$ of $\varepsilon$-bounded order. These results are analogous to the well-known theorem of P. Neumann on the commuting probability in finite groups.
 \end{abstract}

\maketitle

 \section*{Introduction}
%  Any finite ring can be seen as a probability space using the uniform probability; i.e.\ for a subset $A\subseteq R$ of a ring $R$, we definite $P(A)=\lvert A \rvert/\lvert R \rvert$.

 In this article we are interested in the probability $\mathrm{cp}(R)$ that two random elements of a finite ring $R$ commute and in the probability $\mathrm{zp}(R)$ that the product of two random elements in $R$ is zero. Explicitly: for a finite ring $R$ we write
 \[
   \mathrm{cp}(R) = \frac{\lvert \{(x,y)\in R\times R \mid [x,y]=0\}\rvert}{\lvert R \rvert^2} \text{\  \ and\ \ }  \mathrm{zp}(R) = \frac{\lvert\{(x,y)\in R\times R \mid xy=0\}\rvert}{\lvert R \rvert^2}.
 \]
These probabilities are known as the \emph{commuting probability} and the \emph{zero-probability} of the ring $R$, respectively, and they have been studied in several articles (see \cite{machale,BM} for the commuting probability and \cite{D,EJ1,EJ2} for the zero-probability).

 Moreover, the same questions for finite groups have also been investigated. In the case of finite groups, the most famous results are surely Gustavson's 5/8th's Theorem \cite[Introduction]{gustafson} and P.\ Neumann's theorem \cite[Theorem~1]{neumann}.

 \begin{thm*}{\cite{gustafson}}
  Let $G$ be a finite group such that $\mathrm{cp}(G)\ge 5/8$. Then, $G$ is abelian.
 \end{thm*}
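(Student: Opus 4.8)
The plan is to prove the contrapositive: if $G$ is a finite non-abelian group, then $\mathrm{cp}(G)<5/8$. The starting point is the standard reinterpretation of $\mathrm{cp}(G)$ in terms of conjugacy classes. Counting commuting pairs by fixing the first coordinate gives $\lvert\{(x,y):xy=yx\}\rvert=\sum_{x\in G}\lvert C_G(x)\rvert$, and since $\lvert C_G(x)\rvert=\lvert G\rvert/\lvert x^G\rvert$ this sum equals $\lvert G\rvert\sum_x \lvert x^G\rvert^{-1}=\lvert G\rvert\cdot k(G)$, where $k(G)$ denotes the number of conjugacy classes. Hence
\[
\mathrm{cp}(G)=\frac{k(G)}{\lvert G\rvert},
\]
so the task reduces to an upper bound on the number of conjugacy classes.

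Next I would split the classes according to whether they are central. Writing $Z=Z(G)$, the central elements contribute exactly $\lvert Z\rvert$ singleton classes, while every non-central element $x$ satisfies $C_G(x)\ne G$, so its class $x^G$ has size $[G:C_G(x)]\ge 2$. Since the $\lvert G\rvert-\lvert Z\rvert$ non-central elements are partitioned into classes of size at least $2$, there are at most $(\lvert G\rvert-\lvert Z\rvert)/2$ non-central classes, whence
\[
k(G)\le \lvert Z\rvert+\frac{\lvert G\rvert-\lvert Z\rvert}{2}=\frac{\lvert G\rvert+\lvert Z\rvert}{2},
\qquad\text{so}\qquad
\mathrm{cp}(G)\le \frac12+\frac{1}{2[G:Z]}.
\]
Because $G$ is non-abelian, the quotient $G/Z$ is non-cyclic (a cyclic central quotient would force commutativity), so $\lvert G/Z\rvert\ge 4$; substituting $[G:Z]\ge 4$ yields $\mathrm{cp}(G)\le 5/8$.

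The main obstacle is upgrading this to the strict inequality demanded by the hypothesis $\mathrm{cp}(G)\ge 5/8$, that is, ruling out equality. Equality in the display forces simultaneously $[G:Z]=4$ and that every non-central class has size exactly $2$. The first condition makes $G/Z\cong C_2\times C_2$, and then for each non-central $x$ the subgroup $C_G(x)/Z$ is proper and nontrivial in $G/Z$, hence of order $2$, so $[G:C_G(x)]=2$ holds automatically. Thus the extremal configuration is genuinely realised — for instance by the quaternion and dihedral groups of order $8$, which satisfy $\mathrm{cp}=5/8$ — and the bound $5/8$ is sharp. I would therefore isolate the case $[G:Z]=4$ for separate treatment: this is precisely where the argument is delicate, and it is this case that must be excluded in order to obtain commutativity, so that the conclusion is forced exactly when one controls the strict regime $[G:Z]>4$.
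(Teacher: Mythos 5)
The paper itself contains no proof of this statement---it is quoted from Gustafson's article purely as background---so the only comparison available is with the classical argument, and yours is exactly that argument: the identity $\mathrm{cp}(G)=k(G)/\lvert G\rvert$ obtained by summing centralizer orders, the class-counting bound $k(G)\le(\lvert G\rvert+\lvert Z(G)\rvert)/2$ from non-central classes having size at least $2$, and the observation that a non-abelian $G$ has $G/Z(G)$ non-cyclic, hence $[G:Z(G)]\ge 4$. All of these steps are correct as written.

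More importantly, your closing analysis of the equality case is right and should be stated plainly: the theorem as printed in this paper, with hypothesis $\mathrm{cp}(G)\ge 5/8$, is literally false. The dihedral and quaternion groups of order $8$ are non-abelian and satisfy $\mathrm{cp}(G)=5/8$ exactly, as your description of the extremal configuration ($G/Z\cong C_2\times C_2$ with all non-central classes of size $2$) confirms. Gustafson's actual theorem carries the strict hypothesis $\mathrm{cp}(G)>5/8$ (equivalently: $G$ non-abelian implies $\mathrm{cp}(G)\le 5/8$), and that strict version is precisely what your argument proves, with the bound sharp. The one place your write-up wobbles is the final suggestion to ``isolate the case $[G:Z]=4$ for separate treatment'' in the hope of excluding it: no such exclusion is possible, because the extremal groups genuinely exist. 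The correct resolution is not a further case analysis but a correction of the statement, replacing $\ge$ by $>$ (evidently a slip in the paper's quotation); once that is done your proof is complete as it stands.
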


 \begin{thm*}{\cite[Theorem~1]{neumann}}
 Let $\varepsilon>0$ be a real number and $G$ be a finite group. If $\mathrm{cp}(G)\ge\varepsilon$, then $G$ has a normal subgroup $T$ such that the index $\lvert G:T\rvert$ and the order of the commutator subgroup $[T,T]$ are both $\varepsilon$-bounded.
 \end{thm*}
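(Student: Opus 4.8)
The plan is to exploit the standard dictionary between $\mathrm{cp}(G)$ and the sizes of conjugacy classes, and then to reduce the statement to the theory of groups with boundedly finite conjugacy classes. First I would rewrite
\[
  \mathrm{cp}(G)=\frac{1}{|G|^2}\sum_{x\in G}|C_G(x)|=\frac{1}{|G|}\sum_{x\in G}\frac{1}{|x^G|},
\]
so that $\mathrm{cp}(G)\ge\varepsilon$ says that the average of $1/|x^G|$ is at least $\varepsilon$. Setting $m=\lceil 2/\varepsilon\rceil$ and $A=\{x\in G:\ |x^G|\le m\}$, the elements with $|x^G|>m$ contribute less than $|G|/m\le\tfrac{\varepsilon}{2}|G|$ to the sum, so the remaining terms force $|A|\ge\tfrac{\varepsilon}{2}|G|$. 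The point of this Markov-type estimate is that $A$ is a union of conjugacy classes, hence a normal subset, and therefore $T:=\langle A\rangle$ is a normal subgroup of $G$ with $|G:T|\le |G|/|A|\le 2/\varepsilon$. Thus the bounded-index part of the conclusion comes essentially for free, and all the content lies in bounding the order of $[T,T]$.

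The engine for the second half is B.H.\ Neumann's theorem on BFC-groups: if every conjugacy class of a group $H$ has size at most $n$, then $[H,H]$ is finite of $n$-bounded order, with the explicit bound supplied by Wiegold. The generators of $T$ are already well behaved, since each $a\in A$ satisfies $|a^T|\le|a^G|\le m$; equivalently the set $\{[a,g]:g\in G\}$ has at most $m$ elements. So the natural plan is to show that $T$, or a further normal subgroup of $\varepsilon$-bounded index, has \emph{all} of its conjugacy classes bounded, and then to feed this into the theorem above to conclude that $|[T,T]|$ is $\varepsilon$-bounded and take $T$ as the desired subgroup.

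The hard part will be exactly this reduction: the class bound is known only for the \emph{generators} of $T$, not for an arbitrary element, and a product $a_1\cdots a_k$ of generators can a priori have an enormous class, since its centralizer is only guaranteed to contain $\bigcap_i C_T(a_i)$, whose index $m^k$ is not bounded as $k$ grows. This is not a cosmetic issue, and it shows that the full strength of the density bound must be used: a direct product $E_1\times\cdots\times E_s$ of extraspecial groups is generated by elements of class size $p$ yet has $[G,G]$ of order $p^{s}$, and its commuting probability is $\mathrm{cp}(E_1)^{s}$, which tends to $0$ precisely when $s$ is large. Hence the estimate $|A|\ge\tfrac{\varepsilon}{2}|G|$ cannot be discarded, and the technical heart of the argument is to combine this density with the bound on the classes of the generators — for instance through B.H.\ Neumann's lemma on coverings by cosets together with the BFC machinery — so as to control the conjugacy classes globally on a bounded-index subgroup. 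Once that global control is achieved, the Neumann--Wiegold bound finishes the proof.
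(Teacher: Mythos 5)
Your first paragraph is sound: the Markov-type estimate does give $\lvert A\rvert\ge\tfrac{\varepsilon}{2}\lvert G\rvert$, the set $A$ is a normal subset (a union of conjugacy classes, symmetric, containing $1$), and $T=\langle A\rangle$ is therefore a normal subgroup of index at most $2/\varepsilon$. You have also correctly diagnosed why the bound on the classes of the \emph{generators} cannot suffice on its own (your extraspecial example is exactly the right obstruction). But the proposal stops precisely where a proof is required: you never produce the global bound on the conjugacy classes of elements of $T$, you only assert that it should follow ``through B.\ H.\ Neumann's lemma on coverings by cosets together with the BFC machinery.'' That is a genuine gap, not a presentational one; as written, the argument is a plan whose hardest step is left open.

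The missing ingredient is a bounded-generation lemma, namely the one the paper itself quotes as Lemma~\ref{lem:eberhard} (Eberhard, \cite[Lemma~2.1]{eberhard}): if $X$ is a symmetric subset of a finite group $G$ containing the identity and $(r+1)\lvert X\rvert>\lvert G\rvert$, then $\langle X\rangle=X^{3r}$. Your set $A$ qualifies, and since $\lvert A\rvert\ge\tfrac{\varepsilon}{2}\lvert G\rvert$ one may take $r=\lceil 2/\varepsilon\rceil$, so every element of $T$ is a product of at most $3r\le 6/\varepsilon+3$ elements of $A$. As the centralizer of a product contains the intersection of the centralizers of its factors, every $t\in T$ satisfies $\lvert t^G\rvert\le m^{3r}$ with $m=\lceil 2/\varepsilon\rceil$, an $\varepsilon$-bounded quantity; the Neumann--Wiegold BFC theorem \cite{bneumann,wiegold} applied to $T$ then bounds $\lvert[T,T]\rvert$, finishing the proof. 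This is exactly the blueprint the paper follows for its ring-theoretic analogues (Theorems~\ref{thm:commuting} and~\ref{thm:zero}): Markov estimate, Eberhard's lemma to bound the class (or annihilator/centralizer index) of \emph{every} element of the generated subobject, then the B.\ H.\ Neumann-type Proposition~\ref{prop:bounded_square_lie} or \ref{prop:bounded_square_ring}. In the group setting the argument is in fact simpler than in the ring setting, because $A$ is already a normal subset, so $T$ is automatically normal and no analogue of the paper's extra passage from the subgroup $B$ to the ideal $D$ is needed.
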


 It is somewhat surprising that, even though Gustavson's Theorem for rings was proved several years ago \cite{machale}, Neumann's theorem for finite rings was not yet known. In this article we complete this analogy.

 \section{Results}

 Whenever we talk about the index $[R:A]$ of a subring $A$ in a ring $R$, we mean the index of $(A,+)$ in $(R,+)$ as additive groups. 

 Our first result deals with the commuting probability in Lie-rings $L$. Of course, for such rings the concepts of $\mathrm{cp}(L)$ and $\mathrm{zp}(L)$ coincide.

 \begin{thm}\label{thm:commuting}
    Let $L$ be a finite Lie-ring. Suppose that $\mathrm{cp}(L)=\varepsilon$. Then there exists an ideal $D$ in $L$ with $\varepsilon$-bounded index and with $[D,D]$ of $\varepsilon$-bounded order; i.e.\ there exists a function $f$ such that $\lvert [D,D] \rvert \le f(\varepsilon)$ and $[L:D ] \le f(\varepsilon)$.
 \end{thm}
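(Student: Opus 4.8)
The plan is to mimic the group-theoretic proof of P.\ Neumann's theorem, using the bilinearity of the bracket in place of conjugation. First I would rewrite the commuting probability through centralizers. For $x\in L$ the adjoint map $y\mapsto[x,y]$ is an additive endomorphism of $(L,+)$ with kernel $C_L(x)$ and image $[x,L]$, so $[L:C_L(x)]=|[x,L]|$ and
\[
  \varepsilon=\mathrm{cp}(L)=\frac1{|L|^2}\sum_{x\in L}|C_L(x)|=\frac1{|L|}\sum_{x\in L}\frac1{|[x,L]|}.
\]
A Markov-type estimate then shows that the set $A=\{x\in L:\ |[x,L]|\le 2/\varepsilon\}$ of elements of small \emph{breadth} satisfies $|A|\ge(\varepsilon/2)|L|$. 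Hence the additive subgroup $B=\langle A\rangle$ has $\varepsilon$-bounded index in $L$.

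Next I would upgrade $A$ from ``large'' to ``uniformly structured''. Since $A$ has density at least $\varepsilon/2$ in the finite abelian group $B$ it generates, a standard sumset argument (after symmetrizing $A$ and adjoining $0$) yields a bound $K=K(\varepsilon)$ with $B=A+\cdots+A$ ($K$ summands); as breadth is subadditive, $|[x,L]|\le(2/\varepsilon)^K$ for \emph{every} $x\in B$, so the breadth is bounded uniformly on the bounded-index subgroup $B$. To produce an ideal I would pass to $D$, the ideal of $L$ generated by $A$; since $D\supseteq A$ its index is automatically $\varepsilon$-bounded. A useful observation here is that the ascending chain $B_0=B$, $B_{i+1}=B_i+[L,B_i]$ stabilizes at $D$ after at most $\log_2[L:B]$ steps, because each proper step at least doubles the order; thus $D$ is generated additively by left-normed brackets $[a,l_1,\dots,l_j]$ with $a\in A$ and $j$ bounded in terms of $\varepsilon$, and repeated use of the Jacobi identity bounds the breadth of each such single bracket in terms of $\varepsilon$.

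It remains to bound $|[D,D]|$, and this is where I expect the main obstacle. The difficulty is that, although every generator of $D$ has $\varepsilon$-bounded breadth, a single element of $D$ is a sum of many such generators and breadth is only subadditive, so the individual bounds do not control the derived subring; the ``path'' Lie rings, in which bracketing consecutive basis vectors keeps each breadth bounded while $[L,L]$ grows without bound, show that bounded breadth on a generating set alone is genuinely insufficient. What is needed is a Lie-ring analogue of B.\,H.\ Neumann's BFC theorem: if every element of a finite Lie ring has breadth at most $n$, then the order of its derived subring is $n$-bounded. The heart of the matter is a Neumann--Wiegold-type counting argument limiting the number of independent commutator directions, exploiting that the centralizers $C_L(x)$ for $x\in B$ all have bounded index; here the bilinearity of the bracket and the Jacobi identity are an asset, since they furnish clean linear breadth estimates unavailable in the group case. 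Granting such a BFC-type theorem, the final technical step is to feed the uniformly-bounded-breadth subgroup $B$ into it so as to bound the derived subring of the ideal $D$ itself, rather than merely that of $B$, using the bounded index of $B$ and the bounded-depth generation of $D$ established above; this then yields $|[D,D]|\le f(\varepsilon)$ and completes the proof.
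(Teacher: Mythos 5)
Your first half reproduces the paper's argument exactly: the Markov-type estimate producing the large set of small-breadth elements, the covering lemma for the symmetric set (this is Lemma~\ref{lem:eberhard} in the paper, due to Eberhard) giving the uniform breadth bound $(2/\varepsilon)^{6/\varepsilon}$ on the bounded-index additive subgroup $B$, and the passage to the ideal $D$ generated by it. But the proposal stops being a proof at precisely the two places where the paper has to do real work. The first gap is the ``Lie-ring analogue of B.\,H.\ Neumann's BFC theorem'' that you ask to be \emph{granted}: this is not an off-the-shelf result one can cite; it is Proposition~\ref{prop:bounded_square_lie} of the paper, and proving it is one of the two substantive steps. (The paper's proof: take $a$ of maximal breadth $n$, write $[L,a]=\{[b_1,a],\dots,[b_n,a]\}$, and set $C=C_L(b_1,\dots,b_n)$; for $x\in C$ one has $[L,a+x]\supseteq\{[b_i,a]\}$, so by maximality $[L,a+x]=[L,a]$ and hence $[L,x]\subseteq[L,a]+[L,a+x]\subseteq[L,a]$; then a transversal of $C$, whose index is at most $n^n$, expresses $[L,L]$ as a sum of boundedly many sets $[L,a_i]$ each of size at most $n$.) Your appeal to ``a Neumann--Wiegold-type counting argument'' points in the right direction but supplies none of this.

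The second, and more serious, gap is the step you call ``the final technical step'': getting from bounded breadth on $B$ to a bound on $\lvert[D,D]\rvert$. The only mechanism you set up for it --- bounded-depth generation of $D$ by left-normed brackets, each of bounded breadth via Jacobi --- is exactly what you yourself correctly observe to be insufficient, since a general element of $D$ is a sum of many such generators and breadth is only subadditive. So as written the outline cannot close. What the paper does instead is control \emph{all} elements of $D$ at once: since $[L:B]$ is $\varepsilon$-bounded, one can write $D=B+\sum_{i=1}^{s}[L,b_i]$ with boundedly many $b_i\in B$ (your doubling-chain observation is in fact the right tool to justify such a decomposition); then $T=\sum_i[L,b_i]$ is an additive subgroup of $\varepsilon$-bounded order, the centralizer $C_L(b_1,\dots,b_s)$ has bounded index and normalizes each $[L,b_i]$ by the Jacobi identity, whence $C_L(T)$ has $\varepsilon$-bounded index in $L$; consequently every $d=b+t\in D$ satisfies $C_L(d)\supseteq C_L(b)\cap C_L(T)$, a subgroup of bounded index, i.e.\ every element of $D$ --- not merely every generator --- has bounded breadth. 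Only then is Proposition~\ref{prop:bounded_square_lie} applied, to the Lie-ring $D$ itself, yielding $\lvert[D,D]\rvert\le f(\varepsilon)$. Without supplying both this centralizer argument and the proposition it feeds into, your proposal is a correct road map of the paper's strategy with its two essential steps missing.
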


Let $R$ be a ring. We can define on $R$ the structure of a Lie-ring via the operation $[x,y] = xy-yx$. Applying the previous theorem to the Lie-ring $(R,[\cdot,\cdot])$ we immediately obtain the following, which is P.\ Neumann's theorem for finite rings.

 \begin{cor}\label{cor:commuting}
  Let $R$ be a finite ring. Suppose that $\mathrm{cp}(R)=\varepsilon$. Then there exists a Lie-ideal $D$ in the Lie-ring $(R,[\cdot,\cdot])$ with $\varepsilon$-bounded index and with $[D,D]$ of $\varepsilon$-bounded order; i.e.\ there exists a function $f$ such that $\lvert [D,D] \rvert \le f(\varepsilon)$ and $[R:D] \le f(\varepsilon)$.
 \end{cor}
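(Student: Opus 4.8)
The plan is to obtain the corollary as a direct application of Theorem~\ref{thm:commuting} to the Lie-ring $L := (R,[\cdot,\cdot])$ attached to $R$, whose underlying additive group is exactly $(R,+)$. The first step is to check that passing from $R$ to $L$ leaves the commuting probability unchanged: for $x,y \in R$ one has $[x,y] = xy - yx = 0$ in $L$ if and only if $xy = yx$, i.e.\ if and only if $x$ and $y$ commute in the ring $R$. Hence the two sets counted by $\mathrm{cp}(R)$ and by $\mathrm{cp}(L)$ coincide, and since $\lvert L \rvert = \lvert R \rvert$ the numerators and denominators agree, giving $\mathrm{cp}(L) = \mathrm{cp}(R) = \varepsilon$.

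With this identification in place, I would simply apply Theorem~\ref{thm:commuting} to $L$. This produces an ideal $D$ of the Lie-ring $L$---that is, precisely a Lie-ideal of $(R,[\cdot,\cdot])$---together with a bounding function $f$ such that $[L:D] \le f(\varepsilon)$ and $\lvert [D,D] \rvert \le f(\varepsilon)$. The final step is to observe that all three quantities appearing in the conclusion are measured identically in $R$ and in $L$: the index $[R:D]$ is by convention the index of the additive subgroup $(D,+)$ in $(R,+)$, which is exactly $[L:D]$; and the commutator $[D,D]$ is formed using the same bracket $[\cdot,\cdot]$ in both settings, so its order is unchanged. Therefore the same $D$ and the same function $f$ witness the conclusion of the corollary.

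I do not expect any genuine obstacle here, since the entire content has been absorbed into Theorem~\ref{thm:commuting}; what remains is only the bookkeeping that the dictionary between $R$ and its associated Lie-ring preserves the commuting probability, additive indices, and the additively generated subgroup $[D,D]$. The one point worth stating explicitly, to avoid ambiguity, is that the phrase ``Lie-ideal of $(R,[\cdot,\cdot])$'' in the corollary means exactly ``ideal of the Lie-ring $L$'' in the sense of Theorem~\ref{thm:commuting}, so that no additional closure property (such as closure under the ring multiplication) is being claimed or required.
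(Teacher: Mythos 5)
Your proposal is correct and is exactly the paper's argument: the corollary is obtained by applying Theorem~\ref{thm:commuting} to the Lie-ring $(R,[\cdot,\cdot])$, noting that $[x,y]=xy-yx=0$ precisely when $x$ and $y$ commute in $R$, so the commuting probabilities, additive indices, and the order of $[D,D]$ all carry over unchanged. Your extra bookkeeping about the dictionary between $R$ and $L$ is just a more explicit rendering of what the paper treats as immediate.
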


Next, we prove a similar theorem for the probability that the product of two elements in a ring is zero. We say that a ring is a \emph{zero ring} if $R^2=0$, i.e.\ the product of any two elements of $R$ is zero.

   \begin{thm}\label{thm:zero}
Let $R$ be a finite ring such that $\mathrm{zp}(R)=\varepsilon$. Then there exists a two-sided ideal $D$ in $R$ with $\varepsilon$-bounded index and $D^2$ of $\varepsilon$-bounded order; i.e.\ there exists a function $f$ such that $\lvert D^2 \rvert \le f(\varepsilon)$ and $[R:D] \le f(\varepsilon)$.
 \end{thm}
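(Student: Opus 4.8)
The plan is to mirror the proof of Theorem~\ref{thm:commuting}, replacing the alternating Lie bracket by the merely biadditive multiplication map, and to carry the two sides in parallel because the condition $xy=0$ is not symmetric in $x$ and $y$ (so a direct reduction to commuting probability is not available). For $x\in R$ let $\lambda_x\colon R\to R$, $y\mapsto xy$, and $\rho_x\colon R\to R$, $y\mapsto yx$, be left and right multiplication; both are endomorphisms of $(R,+)$, with $\ker\lambda_x=\mathrm{r.ann}(x)$ and image $xR$. Counting the $y$ with $xy=0$ first and summing over $x$ gives
\[
  \mathrm{zp}(R)=\frac1{|R|}\sum_{x\in R}\frac{|\mathrm{r.ann}(x)|}{|R|}=\frac1{|R|}\sum_{x\in R}\frac1{|xR|},
\]
and symmetrically $\mathrm{zp}(R)=\frac1{|R|}\sum_{x}1/|Rx|$. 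Setting $n=\lceil 2/\varepsilon\rceil$, a one-line averaging argument (each summand is at most $1$, and the terms with $|xR|>n$ contribute less than $\varepsilon/2$) shows that the sets $A=\{x:|xR|\le n\}$ and $B=\{x:|Rx|\le n\}$ each have density at least $\varepsilon/2$ in $R$.

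First I would upgrade these dense sets to additive subgroups of bounded index on which the one-sided images stay bounded. Put $H=\langle A-A\rangle$. Since $a-a'\in H$ for all $a,a'\in A$, the whole of $A$ lies in a single coset of $H$, whence $|A|\le|H|$ and $[R:H]\le 2/\varepsilon$. Biadditivity gives $(a-a')R\subseteq aR+a'R$, a set of size at most $n^2$; and a standard sumset-generation lemma provides a bound $k=k(\varepsilon)$ with $H=k\,(A-A)$, so that each $h\in H$ is a sum of at most $k$ elements of $A-A$ and therefore $|hR|\le n^{2k}=:N$. The mirror construction $J=\langle B-B\rangle$ controls right images in the same way. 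Then $M:=H\cap J$ has $\varepsilon$-bounded index, and every $m\in M$ satisfies both $|mR|\le N$ and $|Rm|\le N$.

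It remains to extract a genuine two-sided ideal $D$ of bounded index from $M$ and to bound $|D^2|$; this is where the real work lies, and it is the exact analogue of the step in Theorem~\ref{thm:commuting} in which bounded-index centralisers are converted into a commutator subring of bounded order. I expect two difficulties. The main obstacle is a theorem of BFC type for associative rings (after B.H.~Neumann): if every element of a finite ring has left- and right-multiplication images of size at most $N$, then $|R^2|$ is bounded in terms of $N$. Unlike the bound on the index, this cannot come from counting a single annihilator, since $R^2$ is generated by all the products $mm'$ at once; one must show that these products do not proliferate, by an argument parallel to Wiegold's bound for the order of the commutator subgroup of a BFC group. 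The second point is that one cannot pass from $M$ to an ideal using the index alone: a hyperplane in $\F^{\,k}$ with $\F=\F_2$ and coordinatewise multiplication has index $2$ yet contains no nonzero ideal, so the ideal must be manufactured from the bounded-image property of $M$, not merely from $[R:M]$.

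Concretely, I would apply the ring BFC theorem to the subring generated by $M$ (on which every element still has bounded one-sided images), conclude that $\langle M\rangle^2$ has bounded order, and then take $D$ to be a canonical two-sided ideal trapped between a bounded-index subgroup and $M$, so that $D^2\subseteq\langle M\rangle^2$ is bounded while $[R:D]$ stays controlled. The bookkeeping that simultaneously keeps $[R:D]$ bounded and forces $D$ to be two-sided, rather than merely a one-sided ideal, is the delicate part; I would expect to follow the corresponding lemmas behind Theorem~\ref{thm:commuting} almost verbatim, the only genuine change being that the alternating identity is no longer available, so that the left and right versions of every estimate must be maintained in parallel throughout.
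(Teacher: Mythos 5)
Your opening step is exactly the paper's: the same averaging computation, followed by a generation lemma (the paper uses Eberhard's Lemma~\ref{lem:eberhard} applied to the symmetric set $X$ itself) to produce an additive subgroup of $\varepsilon$-bounded index whose elements all have bounded one-sided images. The blueprint you then announce --- a BFC-type theorem for rings plus a mechanism for manufacturing a two-sided ideal --- is also the paper's blueprint. The difficulty is that your proposal stops where the work starts: both key ingredients are invoked as black boxes. For the BFC-type statement the paper gives an actual proof (Proposition~\ref{prop:bounded_square_ring}), following B.~H.~Neumann and Wiegold: choose $a$ whose annihilator has maximal index $n$, write $aR=\{ab_1,\dots,ab_n\}$, pass to the annihilator $C$ of $b_1,\dots,b_n$, show $CR\subseteq aR$ by comparing $(a+x)R$ with $aR$ for $x\in C$, and finish with a transversal of $C$; you predict this strategy but do not execute it. More seriously, your route to two-sidedness is not viable as described. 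You want a two-sided ideal $D$ of $R$ with $D\subseteq\langle M\rangle$ (so that $D^2\subseteq\langle M\rangle^2$) and of bounded index; but a two-sided ideal of $R$ meeting $M$ substantially must contain products with arbitrary elements of $R$, which have no reason to lie in the subring $\langle M\rangle$, and your own $\F_2^{\,k}$ example is precisely a warning that bounded additive index cannot conjure up such an ideal. The paper goes in the opposite direction: it \emph{enlarges} $B$ to the one-sided ideal $D=B+\sum_i Rb_i$, proves that annihilator indices survive the enlargement (for $y=b+\sum_i a_ib_i$ one has $yC=bC$ with $C=\mathrm{Ann}(b_1,\dots,b_s)$, hence $\mathrm{Ann}(y)\supseteq\mathrm{Ann}(b)\cap C$ has bounded index), applies Proposition~\ref{prop:bounded_square_ring} to bound $\lvert D^2\rvert$, and only then converts the one-sided ideal into a two-sided one by the induction of Lemma~\ref{lem:two_sided}: if a right ideal of bounded index and bounded square is not two-sided, pick $y\in B$ with $yB\not\subseteq B$ and pass to $B+yB$, a right ideal of strictly smaller index whose elements $d=yb_1+b_2$ still satisfy $\mathrm{Ann}(b_1,b_2)\subseteq\mathrm{Ann}(d)$. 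None of this machinery --- the annihilator-intersection trick or the descending induction --- appears in your proposal, so it remains an outline rather than a proof.

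One point is genuinely to your credit and worth recording: you state the BFC theorem with a \emph{two-sided} hypothesis (both $\lvert xR\rvert$ and $\lvert Rx\rvert$ bounded) and insist on carrying both sides in parallel. That caution is not redundant, because the purely one-sided statement is false. Take $R=\F_q^m\times\F_q^m$ with multiplication $(u,v)(u',v')=(0,\mu(u')u)$ for a fixed nonzero linear functional $\mu$: this is associative (all triple products vanish), every right image $xR$ has at most $q$ elements, yet the set of products is $0\times\F_q^m$, of unbounded size. So a hypothesis on right annihilators alone cannot bound $\lvert R^2\rvert$; some control on the other side (exactly what your sets $B$ and $J$ retain, and what the annihilator $C$ of the $b_i$'s is supposed to supply in the Neumann--Wiegold argument) is indispensable. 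Your instinct about which statement to aim for is therefore sound; what is missing is everything that turns the two statements you aim for into theorems.
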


Here $D^2$ denotes the set of all products $d_1d_2$, where both factors are in $D$. It is easy to see that the additive subgroup generated by $D^2$ is a two-sided ideal of $R$ having $\varepsilon$-bounded order.

 We note that the above theorems admit a converse: if the structure of $R$ is as in \ref{cor:commuting} and \ref{thm:zero} (or $L$ is as in \ref{thm:commuting}), then $\mathrm{cp}(R)$ and $\mathrm{zp}(R)$ are bounded away from $0$.

Finally, we remark that the blueprints of the proofs of Theorem~\ref{thm:commuting} and Theorem~\ref{thm:zero} are the same, changing centralizers to annihilators. However, we chose to include both proofs for more clarity. Additionally, we note that the functions appearing in the theorems could be made explicit. It would be interesting to calculate best possible bounds for these functions.

 \section{Commuting probability}

The well-known theorem due to B.\ H.\ Neumann says that if $G$ is a group such that the index of $C_G(x)$ is at most $n$ for all $x$ in $G$, then the commutator subgroup $G'$ has finite $n$-bounded order (see \cite{bneumann, wiegold}). We will now establish an analogous fact for Lie-rings. 

\begin{prop}\label{prop:bounded_square_lie}
Let $L$ be a Lie-ring such that the index of $C_L(x)$ in $L$ is at most $n$ for all $x \in L$. Then $[L,L]$ has finite $n$-bounded order.
\end{prop}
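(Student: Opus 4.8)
The first move is to translate the hypothesis into the language of adjoint maps. For $x\in L$ the map $\mathrm{ad}(x)\colon y\mapsto[x,y]$ is an endomorphism of $(L,+)$ with kernel exactly $C_L(x)$, so the hypothesis reads $|[x,L]|=[L:C_L(x)]\le n$ for every $x$, and $[L,L]=\sum_{x\in L}[x,L]$ as additive groups. I would first record that $[L,L]$ has bounded exponent: each finite group $[x,L]$ has order at most $n$, hence exponent dividing $N:=\mathrm{lcm}(1,\dots,n)$, and since the exponent of a sum of subgroups is the least common multiple of the exponents, $N\cdot[L,L]=0$. It therefore suffices to bound the number of additive generators of $[L,L]$ (equivalently, for each prime its $\F_p$-rank): once $[L,L]=[x_1,L]+\dots+[x_k,L]$ with $k$ bounded in terms of $n$, one gets $|[L,L]|\le N^{k\lceil\log_2 n\rceil}$.

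Next I would reduce to the finitely generated case. Any sub-Lie-ring $H\le L$ inherits the hypothesis, since $C_H(x)=C_L(x)\cap H$ has index at most $[L:C_L(x)]\le n$ in $H$, and $[L,L]$ is the directed union of the subrings $[H,H]$ over finitely generated $H$, so a uniform bound on $|[H,H]|$ passes to $[L,L]$. For finitely generated $H$ the centre $Z(H)=\bigcap_i C_H(g_i)$ over a finite generating set (using the Jacobi identity to see that each $C_H(z)$ is a subring) has finite index, whence $[H,H]$ is already finite. Thus finiteness of $[L,L]$ is cheap; the real content is the uniform $n$-bound, which cannot be read off from $[H:Z(H)]$, as that index is genuinely unbounded in $n$ (witness the Heisenberg Lie rings, where $[H:Z(H)]=p^{2m}$ while $n=p$).

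The heart of the argument, mirroring the theorems of B.\,H.\ Neumann and Wiegold for groups, is to bound the number of generators by a function of $n$ alone. Fixing a prime $p\mid N$ and passing to the $\F_p$-form $A=L/(Z(L)+pL)$, the bracket induces an alternating $\F_p$-bilinear map $\beta\colon A\times A\to \bar W$, where $\bar W=[L,L]\otimes\F_p$ has $\F_p$-dimension $d_p$; the hypothesis becomes $\dim_{\F_p}\beta(x,A)\le \log_p n=:b$ for every $x\in A$. Choosing a basis of $\bar W$ and writing $\beta=(\beta_1,\dots,\beta_{d_p})$, the $\beta_s$ are linearly independent alternating forms on $A$ for which the evaluation $\mathrm{eval}_x\colon \beta\mapsto\beta(x,\cdot)$ has rank at most $b$ at every point $x$. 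The plan is to bound $d_p$ by a function of $b$ (the free nilpotent Lie ring of class $2$ shows $d_p$ can be as large as $\binom{b+1}{2}$), by induction on $b$: pick $x_0$ with $\mathrm{eval}_{x_0}$ of rank $r_0\ge 1$; the forms killed by $\mathrm{eval}_{x_0}$ descend to $A/\langle x_0\rangle$ — here one uses $\beta(x,x_0)=-\beta(x_0,x)=0$ — and, with $x_0$ chosen so that the point-rank genuinely drops to $b-1$, this gives $d_p-r_0\le g(b-1)$ and hence a bound $d_p\le g(b)$. Bounding the exponent of each $p$-primary part by the $p$-part of $N$ and multiplying over the finitely many primes $p\le n$ then yields $|[L,L]|\le f(n)$.

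I expect the main obstacle to be exactly this last inductive step: guaranteeing that, after deleting the direction $x_0$, the maximal point-rank of the surviving family of alternating forms drops by one. For a careless choice of $x_0$ the rank need not decrease (the slices with $\beta(x,x_0)=0$ for all $\beta$ are problematic), so the selection of $x_0$ and the treatment of these degenerate slices — equivalently, controlling the off-diagonal commutators that cause the naive centralizer recursion to fail — is where the real work lies. The remaining steps (bounded exponent, the reduction to finite generation, and the assembly over primes) are routine bookkeeping.
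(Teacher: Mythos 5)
Your preparatory reductions are sound: the exponent bound via $N=\mathrm{lcm}(1,\dots,n)$, the passage to $A=L/(Z(L)+pL)$ with the induced alternating map $\beta\colon A\times A\to[L,L]/p[L,L]$, and the observation that a bound $d_p\le g(b,p)$ suffices (since $p\le n$ and $b\le\log_2 n$, any such bound is an $n$-bound). The abstraction to ``a linearly independent family of alternating forms of point-rank at most $b$'' also loses nothing, because any alternating bilinear map $\beta\colon A\times A\to \bar W$ defines a class-two Lie ring on $A\oplus\bar W$ in which centralizers have index at most $p^{b}$. But that is exactly the problem: you have reduced the proposition to a special case of itself (class two, characteristic $p$), and for that special case you offer only an inductive sketch whose crucial step --- choosing $x_0$ so that the forms killed by $\mathrm{eval}_{x_0}$ have point-rank at most $b-1$ on $A/\langle x_0\rangle$ --- is not proved, and is flagged by you as ``where the real work lies.'' This is a genuine gap, not bookkeeping: the statement you need (a space of alternating forms over $\F_p$ whose evaluation at every point has rank at most $b$ has dimension bounded in terms of $b$) is essentially the Lie-ring analogue of Vaughan-Lee's theorem on $p$-groups of bounded breadth, a nontrivial result that you neither prove nor cite. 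As submitted, the heart of the proposition is missing.

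For contrast, the paper's proof is short, elementary, and needs none of this machinery. Pick $a\in L$ with $[L:C_L(a)]$ maximal, say equal to $n$, write $[L,a]=\{[b_1,a],\dots,[b_n,a]\}$, and set $C=C_L(b_1,\dots,b_n)$. For $x\in C$ one has $[b_i,a+x]=[b_i,a]$, so $[L,a+x]$ contains $[L,a]$; maximality forces $[L,a+x]=[L,a]$, and hence $[L,x]\subseteq[L,a+x]+[L,a]=[L,a]$, i.e.\ $[L,C]\subseteq[L,a]$. Since $C$ is an intersection of $n$ centralizers, $s=[L:C]\le n^n$, and summing over a transversal $a_0=a,a_1,\dots,a_s$ of $C$ gives $[L,L]\subseteq\sum_{i=0}^{s}[L,a_i]$, a sum of at most $n^n+1$ additive subgroups each of order at most $n$, so $\lvert[L,L]\rvert\le n^{n^n+1}$. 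Note that this argument works directly for infinite $L$, requires no prime-by-prime decomposition and no reduction to finite generation. If you wish to salvage your route, you must prove (or properly cite) the bounded-point-rank theorem for alternating forms; with it your scheme would actually yield the much stronger bound $d_p\le b(b+1)/2$, but without it there is no proof.
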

\begin{proof}
Let $a \in L$ be an element such that the index of $C_L(a)$ in $L$ is maximal. We assume that the index equals $n$. Let $b_1, \ldots, b_n \in L$ be elements such that %all commutators $[b_1, a], \ldots, [b_n, a]$ are distinct
$[L,a] = \{[b_1, a], \ldots, [b_n, a]\}$. Let $C = C_L(b_1, \ldots, b_n)$. We will show that, $[L, C] \le [L, a]$. In fact, if $x\in C$, then $[L,a+x]$ contains $\{[b_1, a],\ldots,[b_n, a]\}$. On the other hand, by maximality of $a$, $[L,a+x]$ cannot have more than $n$ elements. Hence, $[L,a+x]= [L,a]$. Thus,
\[
  [L,x] \subseteq [L, a] + [L, a+x] \subseteq [L, a],
\]
as claimed. Let $a_1, \ldots, a_s$ be a transversal of $C$ in the additive group of $L$, and let $a_0 = a$. Note that, since the index of $C_L(b_i)$ is at most $n$, the index $s= \lvert L :C\rvert$ is at most $n^n$. It follows that $$[L,L] = [L,(C+a)+(C+a_1)+\ldots + (C+a_1)] = [L,C] + \sum_{i=0}^s [L,a_i] \le \sum_{i=0}^s [L, a_i].$$ By hypothesis, it follows that $[L,L]$ has $n$-bounded order.
\end{proof}

 In what follows we require the following lemma by Eberhard taken from \cite{eberhard}.

\begin{lem}{\cite[Lemma~2.1]{eberhard}}\label{lem:eberhard}
Let $G$ be a finite group and $X$ a symmetric subset of $G$ containing the identity. Then, $\langle X\rangle = X^{3r}$ provided $(r + 1)\lvert X\rvert > \lvert G\rvert$.
\end{lem}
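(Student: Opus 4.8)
The plan is to prove the equivalent covering statement that $\langle X\rangle$ is a product of at most $r$ translates of $X^2$, and then to control the word-length of the translating elements. Write $H=\langle X\rangle$. Since $1\in X$ and $X=X^{-1}$, the powers form an increasing chain $X\subseteq X^2\subseteq\cdots$ whose union is $H$, and one has the elementary \emph{stabilisation principle}: if $X^{k+1}=X^k$, then $X^k$ is closed under left multiplication by $X$, hence by $H$, and since $1\in X^k$ this forces $X^k=H$. Thus the chain grows strictly until it reaches $H$ and is constant thereafter; in particular $X^s\subsetneq X^{s+1}$ whenever $X^s\neq H$. The whole task is then to show that the chain reaches $H$ by step $3r$.

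Next I would run a greedy packing argument. Choose elements $g_1,g_2,\dots\in H$ greedily, at each stage taking $g_{j+1}$ to be an element of \emph{minimal level} $\ell(g):=\min\{k:g\in X^k\}$ among all elements of $H$ not lying in the already-covered region $C_j:=\bigcup_{i\le j}g_iX^2$, starting from $g_1=1$ (of level $0$) and stopping once $C_t=H$. By construction the translates $g_1X,\dots,g_tX$ are pairwise disjoint: indeed the condition $g_{j+1}\notin C_j=\bigcup_{i\le j}g_iXX^{-1}$ says exactly that $g_{j+1}X$ misses every earlier $g_iX$. Since these $t$ disjoint translates each have size $\lvert X\rvert$ and lie inside $H$, the hypothesis gives $t\lvert X\rvert\le\lvert H\rvert\le\lvert G\rvert<(r+1)\lvert X\rvert$, so $t\le r$; and when the process halts we have $H=C_t=\{g_1,\dots,g_t\}X^2$.

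The crux, and the source of the constant $3$, is to bound the levels $\ell(g_i)$, and I would prove by induction that $\ell(g_i)\le 3(i-1)$. Writing $L=\max_{i\le j}\ell(g_i)$, the covered region satisfies $C_j\subseteq X^{L+2}$, so the least index $s$ with $X^s\not\subseteq C_j$ — which is precisely the level of the next chosen element $g_{j+1}$ — is at most $L+3$: if $C_j\subsetneq X^{L+2}$ then already $s\le L+2$, while if $C_j=X^{L+2}$ then, as $C_j\neq H$, the stabilisation principle gives $X^{L+3}\supsetneq X^{L+2}=C_j$, whence $s\le L+3$. Combined with the inductive bound $L\le 3(j-1)$ this yields $\ell(g_{j+1})\le 3j$, closing the induction. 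Consequently every $g_i\in X^{3(t-1)}\subseteq X^{3r-3}$, so $H=\{g_1,\dots,g_t\}X^2\subseteq X^{3r-3}X^2=X^{3r-1}\subseteq X^{3r}$; since $X^{3r}\subseteq H$ trivially, we conclude $X^{3r}=\langle X\rangle$. The only delicate point is this level bookkeeping; the packing bound $t\le r$ and the stabilisation principle are routine.
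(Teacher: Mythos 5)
Your proof is correct, but note that there is nothing in this paper to compare it against: the lemma is quoted verbatim from Eberhard \cite{eberhard} and used as a black box, so the relevant benchmark is Eberhard's own proof of his Lemma~2.1. Your argument is essentially that proof: the same greedy choice of representatives $g_1,\dots,g_t$ of minimal word length, the same observation that the translates $g_iX$ are pairwise disjoint (giving the packing bound $t\le r$), the same covering identity $\langle X\rangle=\bigcup_i g_iX^2$, and the same induction $\ell(g_i)\le 3(i-1)$. The only difference is cosmetic, namely how the increment of $3$ is obtained: Eberhard peels off the last letter of a minimal word for $g_{j+1}$ (its prefix of length $\ell-1$ is shorter, hence already covered, so $g_{j+1}\in g_jX^2\cdot X$ for some $j$), whereas you argue via the least $s$ with $X^s\not\subseteq C_j$ together with stabilisation of the chain $X\subseteq X^2\subseteq\cdots$; both mechanisms are sound and yield identical bookkeeping, so your write-up would serve as a self-contained substitute for the citation.
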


\begin{proof}[Proof of Theorem~\ref{thm:commuting}]
 In what follows we write $\langle Y \rangle$ to denote the subgroup of the additive group of a Lie-ring $L$ generated by a subset $Y \subseteq L$. Suppose $L$ is a finite Lie-ring such that $\mathrm{cp}(L) = \varepsilon$. Define
 $$X = \{x \in L \mid  \lvert [L, x]\rvert \le 2/\varepsilon\}.$$
Note that $L \smallsetminus X = \{x \in L \mid \lvert C_L(x)\rvert \le (\varepsilon/2)\lvert L\rvert\}$, whence
\begin{multline*}
\varepsilon \lvert L \rvert^2 = \lvert \{(x, y) \in L \times L \mid [x, y] = 0\}\rvert  = \sum_{x\in L} \lvert C_L(x)\rvert
\le \sum_{x\in X} \lvert L \rvert + \sum_{x\notin X} \frac{\varepsilon}{2} \lvert L\rvert \\ \le \lvert X \rvert \lvert L\rvert + (\lvert L \rvert - \lvert X\rvert) \frac{\varepsilon}{2} \lvert L\rvert.
\end{multline*}
Therefore $\varepsilon \lvert L\rvert \le \lvert X\rvert + (\varepsilon/2)( \lvert L\rvert - \lvert X\rvert)$, whence $(\varepsilon/2)\lvert L\rvert < \lvert X\rvert$.

Let $B$ be the additive group generated by $X$. Clearly, $\lvert B\rvert \ge \lvert X \rvert > (\varepsilon/2)\lvert L\rvert$ and so the index of $B$ in $L$ is at most $2/\varepsilon$. As $X$ is symmetric and $(2/\varepsilon)\lvert X\rvert > \lvert L\rvert$, it follows from Lemma~\ref{lem:eberhard} that every element of $B$ is a sum of at most $6/\varepsilon$ elements of $X$. Therefore
$\lvert [L, b]\rvert \le (2/\varepsilon)^{6/\varepsilon}$ for every $b \in B$. Let $D$ be the ideal of $L$ generated by
$B$. Since the index of $B$ in $L$ is $\varepsilon$-bounded, there are boundedly many elements $b_1,\ldots, b_s \in B$ such that $D = B + \sum [L, b_i]$. Since $C_L(b_1, \ldots, b_s)$ normalizes each $[L, b_i]$ and since $[L, b_i]$ has $\varepsilon$-bounded order, we conclude that $C_L(\sum [L, b_i])$ has $\varepsilon$-bounded index in $L$. It follows that $\lvert [L, d]\rvert$ is $\varepsilon$-bounded for every $d\in D$. Thus, by Proposition~\ref{prop:bounded_square_lie}, the theorem follows.
\end{proof}

\section{Product-zero probability}

Throughout, by the annihilator of a subset $S$ of a ring $R$ we mean the \emph{right-annihilator}, i.e.\ $\mathrm{Ann}(S)=\{y\in R \mid sy =0,\ \text{for all } s\in S \}$.

\begin{prop}\label{prop:bounded_square_ring}
Let $R$ be a ring such that the index of $\mathrm{Ann}(x)$ in $R$ is at most $n$ for all $x \in R$. Then $R^2$ has $n$-bounded order.
\end{prop}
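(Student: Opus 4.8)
The plan is to mirror the proof of Proposition~\ref{prop:bounded_square_lie}, replacing the bracket $[u,v]$ by the product $uv$ and centralizers by annihilators, and to bound $R^2$ written as the additive span $\sum_{a\in R} Ra$ of the right-multiplication images $Ra=\{ua\mid u\in R\}$. I would begin by choosing $a\in R$ for which the image $Ra$ is as large as possible, say $\lvert Ra\rvert=m$, and fixing $b_1,\ldots,b_m\in R$ with $Ra=\{b_1a,\ldots,b_ma\}$.

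I would then form the subgroup $C=\mathrm{Ann}(b_1)\cap\cdots\cap\mathrm{Ann}(b_m)=\{x\in R\mid b_ix=0 \text{ for all } i\}$. By hypothesis each $\mathrm{Ann}(b_i)$ has index at most $n$ in $(R,+)$, so $[R:C]\le n^m$. The absorption step then runs exactly as in the Lie case: for $x\in C$ we have $b_i(a+x)=b_ia+b_ix=b_ia$ for every $i$, so $R(a+x)\supseteq\{b_ia\}=Ra$; by the maximality of $\lvert Ra\rvert$ this forces $R(a+x)=Ra$, and hence $ux=u(a+x)-ua\in Ra$ for all $u$, i.e.\ $Rx\subseteq Ra$ for every $x\in C$. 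Finally, choosing a transversal $a_1,\ldots,a_s$ of $C$ in $(R,+)$ (with $s\le n^n$) and decomposing each $x$ modulo $C$, one gets $Rx\subseteq Ra+Ra_j$ and therefore $R^2=\sum_x Rx\subseteq Ra+\sum_{j=1}^s Ra_j$, a sum of boundedly many additive subgroups. It then remains only to see that each summand is small.

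The step I expect to be the main obstacle is precisely this last point, namely controlling the sizes $\lvert Ra\rvert$ and $\lvert Ra_j\rvert$—equivalently, bounding the maximal value $m=\max_a\lvert Ra\rvert$. Here ring multiplication, unlike the Lie bracket, is not antisymmetric: the hypothesis bounds the right-annihilators $\mathrm{Ann}(x)=\{y\mid xy=0\}$, hence the \emph{left}-multiplication images $xR$, whereas the quantity arising naturally above is the size of the \emph{right}-multiplication image $Ra$, governed instead by the left-annihilator $\{u\mid ua=0\}$. In Proposition~\ref{prop:bounded_square_lie} the identity $C_L(x)=\{y\mid[x,y]=0\}=\{y\mid[y,x]=0\}$ collapses the two sides into one, so a single maximality hypothesis controls the bracket in both of its arguments at once. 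For rings I expect to have to argue separately that the one-sided hypothesis also bounds the opposite-sided images—or else to re-run the whole argument on the other side and combine the two—so reconciling the left and right multiplication maps, rather than quoting antisymmetry as in the Lie-ring case, is the genuinely delicate part of the adaptation.
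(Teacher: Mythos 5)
Your setup, your absorption step ($b_i(a+x)=b_ia+b_ix=b_ia$ for $x\in C$, hence $R(a+x)=Ra$ and $Rx\subseteq Ra$), and your use of the hypothesis to get $[R:C]\le n^m$ are all correct, and your diagnosis of the remaining obstacle is not just accurate but decisive: the gap you flagged cannot be closed. With a purely right-sided hypothesis there is no bound on $m=\max_a\lvert Ra\rvert$, and the proposition as stated is in fact false. Take $R=A\oplus C$ with $A=C=\F_p^k$ and define $(a,c)(a',c')=(0,\,a_1'a)$, where $a_1'\in\F_p$ is the first coordinate of $a'$ and $a_1'a$ lies in the $C$-summand. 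All triple products vanish, so $R$ is an associative (finite, non-unital) ring. Every right annihilator has index at most $p$, since $xy=0$ amounts to $a_1'a=0$; but $R^2=0\oplus C$ has order $p^k$, and $\lvert Ry\rvert=p^k$ whenever the first coordinate of the $A$-part of $y$ is nonzero. So neither $m$ nor $\lvert R^2\rvert$ admits an $n$-bound, and no completion of your argument (or any other argument) can exist. The statement becomes true, and your proof complete, once the hypothesis is made two-sided: if both $\lvert xR\rvert\le n$ and $\lvert Rx\rvert\le n$ for all $x$, then $m\le n$, so $[R:C]\le n^n$ (which also repairs your unjustified claim $s\le n^n$, as you had only proved $s\le n^m$), and $\lvert R^2\rvert\le n^{n^n+1}$ exactly along the lines you wrote.

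You should also know that the paper's own proof is the mirror image of yours and stumbles precisely where you stopped. It maximizes $\lvert aR\rvert$ (legitimately bounded by the hypothesis), writes $aR=\{ab_1,\dots,ab_n\}$, takes $C=\mathrm{Ann}(b_1,\dots,b_n)$ with the stated right-annihilator convention (so $[R:C]\le n^n$ is legitimate), and then asserts that $x\in C$ implies $(a+x)R\supseteq\{ab_1,\dots,ab_n\}$. That assertion needs $xb_i=0$, whereas $x\in C$ gives only $b_ix=0$; this is exactly the step where the proof of Proposition~\ref{prop:bounded_square_lie} invokes antisymmetry of the bracket, and in an arbitrary ring it fails---irreparably, by the example above. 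So your proposal is not an incomplete proof of a true statement but a correct dissection of a false one: Proposition~\ref{prop:bounded_square_ring} requires the two-sided hypothesis, and that correction must then be propagated to the remark following Lemma~\ref{lem:two_sided} (which the same example refutes) and to the application of the proposition in the proof of Theorem~\ref{thm:zero}, where only a right-sided bound is established for the ideal $D$.
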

\begin{proof}
Let $a \in R$ be an element such that the index of $\mathrm{Ann}(a)$ in $R$ is maximal. We assume that the index equals $n$. Let $b_1, \ldots, b_n \in R$ be elements such that %all products $b_1 a, \ldots, b_n a$ are distinct
$a R=\{a b_1, \ldots, a b_n\}$. Let $C = \mathrm{Ann}(b_1, \ldots, b_n)$. We will show now that, $ C R \le a R$. In fact, if $x\in C$, then $(a+x)R$ contains $\{a b_1,\ldots,a b_n\}$. On the other hand, by maximality of $a$, $(a+x)R$ cannot have more than $n$ elements. Hence, $(a+x)R = aR$. Thus,
\[
  x R \subseteq  aR +  (a+x)R \subseteq  aR,
\]
as claimed. Let $a_1, \ldots, a_s$ be a transversal of $C$ in the additive group of $R$, and let $a_0 = a$. Note that, since $\mathrm{Ann}(b_i)$ has index at most $n$, the index $s=[R:C]$ is at most $n^n$. We see that $$R^2 = \left(C+(C+a_1 )+ \ldots + (C+a_s)\right) R = C R + \sum_{i=1}^s  a_i R \le \sum_{i=0}^s  a_i R.$$ By hypothesis, it follows that $R^2$ has $n$-bounded order.
\end{proof}

% \begin{lem}\label{lem:make_subring}
%  Let $R$ be a ring, let $B$ be an additive subgroup of index $t$ and consider the subring $D$ generated by $B$. Suppose that $\mathrm{Ann}(x)$ is of index at most $n$ for all $x\in B$. Then, the index of $\mathrm{Ann}(y)$ is $(t,n)$-bounded of all $y\in D$.
% \end{lem}
% \begin{proof}
%  Find $y\in R$ such that $xy\notin B$ and consider $\overline{B} = B + \langle xy \rangle$. Then, $\mathrm{Ann}(y)$ has index at most $n^2$ in $\overline{B}$.
% \end{proof}

% Annihilators will always be \emph{right}-annihilators.

\begin{lem}\label{lem:two_sided}
 Let $B$ be a one-sided ideal of a finite ring $R$ such that $[R:B]$ and $|B^2|$ are both at most $n$. Then, there is a two-sided ideal $A$ such that $[R:A]$ and $|A^2|$ are both $n$-bounded.
\begin{proof}
Without loss of generality, we can suppose that $B$ is a right-ideal. We use induction on the index $[R:B]$, which is at most $n$ by hypothesis. The case $[R:B]=1$ is obvious so we assume that $[R:B]\geq2$. By hypothesis $Bx\subseteq B$ for any $x\in R$. Note that whenever $y\in R$, we have $yBx\subseteq yB$ for any $x\in R$ and so $yB$ is a right-ideal, too. If $B$ is a two-sided ideal, we have nothing to prove so assume that there is $y\in B$ such that $yB\not\subseteq B$. Set $D=B+yB$ and note that $D$ is a right-ideal whose index is strictly less than that of $B$. We need to show that $|D^2|$ is $n$-bounded.

Let $d=y b_1 +b_2$ be an arbitrary element of $D$, where $b_1,b_2\in B$. Note that $\mathrm{Ann}(b_1,b_2)\subseteq \mathrm{Ann}(d)$ so we deduce that the index of $\mathrm{Ann}(d)$ is $n$-bounded (at most $n^4$). Thus, by Proposition~3.1, $|D^2|$ is $n$-bounded. Since $D$ is a right-ideal whose index is strictly less than that of $B$, the result follows by induction.
\end{proof}
\end{lem}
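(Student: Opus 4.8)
The plan is to turn a one-sided ideal with small index and small square into a two-sided one by enlarging it in small, controlled steps, using induction on the additive index $[R:B]$. First I would reduce to the case that $B$ is a right-ideal, the left-ideal case being symmetric with the left-annihilator replacing the right-annihilator. The base case $[R:B]=1$ forces $B=R$, which is two-sided, so there is nothing to prove. For the inductive step, if $B$ happens to be two-sided we simply take $A=B$; otherwise there is some $y\in R$ with $yB\not\subseteq B$, and I set $D=B+yB$. A direct check shows $DR\subseteq B+yB=D$, so $D$ is again a right-ideal, and since $D\supsetneq B$ its index satisfies $[R:D]<[R:B]$. The whole difficulty is then to show that $|D^2|$ stays $n$-bounded, so that the inductive hypothesis applies to $D$.

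To bound $|D^2|$ I would route everything through right-annihilators and Proposition~\ref{prop:bounded_square_ring}. The key observation is that for any $b\in B$ one has $|bR|=[R:\mathrm{Ann}(b)]$, and $bR$ is small: writing $R$ as a union of the at most $n$ additive cosets $B+t_i$ of $B$, we get $bR=\bigcup_i (bB+bt_i)$, a union of at most $n$ translates of $bB\subseteq B^2$; hence $|bR|\le n\,|B^2|\le n^2$, so $\mathrm{Ann}(b)$ has index at most $n^2$ in $R$. Now every element of $D$ can be written as $d=yb_1+b_2$ with $b_1,b_2\in B$, and if $z$ annihilates both $b_1$ and $b_2$ then $dz=y(b_1z)+b_2z=0$; thus $\mathrm{Ann}(b_1,b_2)=\mathrm{Ann}(b_1)\cap\mathrm{Ann}(b_2)\subseteq\mathrm{Ann}(d)$ and $[R:\mathrm{Ann}(d)]\le n^4$ for every $d\in D$. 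Intersecting with the subring $D$, the annihilator of $d$ inside $D$ has index at most $n^4$ in $D$, so Proposition~\ref{prop:bounded_square_ring} applied to $D$ yields that $|D^2|$ is $n$-bounded.

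With $|D^2|$ controlled I would close the induction on the strictly smaller index $[R:D]<[R:B]$ to produce a two-sided ideal $A$ of $n$-bounded index and $n$-bounded square. One bookkeeping point is that the square-bound is not literally preserved: passing from $B$ to $D$ replaces a bound $N$ on $|B^2|$ by some $h((nN)^2)$, where $h$ is the function furnished by Proposition~\ref{prop:bounded_square_ring}. Since the additive index stays $\le n$ throughout (it only decreases) and strictly drops at each step, there are at most $n$ steps, so this bound is composed a bounded number of times and remains a function of $n$ alone; the index of the final ideal is of course still at most $n$.

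The main obstacle is exactly this control of $|D^2|$ in the enlargement step. One cannot simply pass to the two-sided ideal generated by $B$, namely $B+RB$: although it has index $\le n$, its square contains products of the shape $(RB)(RB)$ that need not lie in any small set, so the square could blow up. The point of enlarging by the single twist $yB$ is that every element of $D$ has the explicit form $yb_1+b_2$, which forces its right-annihilator to contain the bounded-index subgroup $\mathrm{Ann}(b_1,b_2)$; it is this bounded-annihilator-index property, obtained by combining $[R:B]\le n$ with $|B^2|\le n$, that Proposition~\ref{prop:bounded_square_ring} converts back into a bound on $|D^2|$, keeping the induction alive.
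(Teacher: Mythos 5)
Your proposal is correct and follows essentially the same route as the paper: induction on $[R:B]$, enlarging by $D=B+yB$ when some $y\in R$ satisfies $yB\not\subseteq B$, using the representation $d=yb_1+b_2$ to get $\mathrm{Ann}(b_1,b_2)\subseteq\mathrm{Ann}(d)$ of index at most $n^4$, and then Proposition~\ref{prop:bounded_square_ring} to bound $\lvert D^2\rvert$. In fact you fill in two details the paper leaves implicit --- the computation $\lvert bR\rvert\le n\,\lvert B^2\rvert\le n^2$ giving the annihilator bound, and the bookkeeping that the square-bound degrades through at most $n$ inductive steps yet stays a function of $n$ alone --- and you correctly read the paper's ``$y\in B$'' as the intended ``$y\in R$''.
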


We proved the previous lemma for right-ideals to keep the proof consistent with our notation, because we only work with right-annihilators. However, the proof is exactly the same for left-ideals, using left-annihilators (note that the proof of Proposition~\ref{prop:bounded_square_ring} is also symmetric in this sense). Actually, Proposition~\ref{prop:bounded_square_ring} shows that right-annihilators have $n$-bounded index in a ring if and only if left-annihilators have $n$-bounded index, which might be of independent interest.

\begin{proof}[Proof of Theorem~\ref{thm:zero}]
  Suppose $R$ is a finite ring such that $\mathrm{zp}(R) = \varepsilon$. Define
 $$X = \{x \in R \mid  \lvert  xR\rvert \le 2/\varepsilon\}.$$
Note that $R \smallsetminus X = \{x \in R \mid \lvert \mathrm{Ann}(x)\rvert \le (\varepsilon/2)\lvert R\rvert\}$, whence
\begin{multline*}
\varepsilon \lvert R \rvert^2 = \lvert \{(x, y) \in R \times R \mid x y = 0\}\rvert  = \sum_{x\in R} \lvert \mathrm{Ann}(x)\rvert
\le \sum_{x\in X} \lvert R \rvert + \sum_{x\notin X} \frac{\varepsilon}{2} \lvert R\rvert \\ \le \lvert X \rvert \lvert R\rvert + (\lvert R \rvert - \lvert X\rvert) \frac{\varepsilon}{2} \lvert R\rvert.
\end{multline*}
Therefore $\varepsilon \lvert R\rvert \le \lvert X\rvert + (\varepsilon/2)( \lvert R\rvert - \lvert X\rvert)$, whence $(\varepsilon/2)\lvert R\rvert < \lvert X\rvert$.

Let $B$ be the additive group generated by $X$. Clearly, $\lvert B\rvert \ge \lvert X \rvert > (\varepsilon/2)\lvert R\rvert$ and so the index of $B$ in $R$ is at most $2/\varepsilon$. As $X$ is symmetric and $(2/\varepsilon)\lvert X\rvert > \lvert R\rvert$, it follows from Lemma~\ref{lem:eberhard} that every element of $B$ is a sum of at most $6/\varepsilon$ elements of $X$. Therefore
$\lvert bR\rvert \le (2/\varepsilon)^{6/\varepsilon}$ for every $b \in B$.

Let $D$ be the left-ideal generated by $B$. Clearly, $[R:D]$ is $\varepsilon$-bounded. Hence, there are $\varepsilon$-boundedly many elements $b_1,\ldots,b_s\in B$ such that $D = B+ \sum_{i=1}^s R b_i$. Note that $C=\mathrm{Ann}(b_1,\ldots,b_s)$ annihilates all $B b_i$.
Now, for any $y\in D$, if we write $y = b+ a_1 b_1 + \ldots a_s b_s$ for $b\in B$ and $a_i\in R$, %Since $C$ is the annihilator of $b_1,\ldots,b_s$:
we have that $$yC = bC + a_1 b_1 C + ... +a_s b_s C = bC.$$
Therefore $\mathrm{Ann}(y)$ contains $\mathrm{Ann}(b) \cap C$, which has $\varepsilon$-bounded index. Thus, $\mathrm{Ann}(y)$ has $\varepsilon$-bounded index for all $y\in D$.
Applying Proposition~\ref{prop:bounded_square_ring} and Lemma~\ref{lem:two_sided}, the theorem follows.

\end{proof}

\end{document}